\newtheorem{conj}{Conjecture}
\newtheorem{thm}{Theorem}[section]
\newtheorem{lemm}[thm]{Lemma}
\theoremstyle{remark}
\theoremstyle{definition}
\title{Biharmonic properly immersed submanifolds \\ in Euclidean spaces}
\author{Kazuo Akutagawa${}^{\ast}$} 
\author{Shun Maeta${}^{\dagger}$}
\thanks{
${}^{\ast}$
supported in part by Grants-in-Aid for Scientific Research (B), 
Japan Society for the Promotion of Science, No.~24340008. \\
\qquad
${}^{\dagger}$
supported in part by 
Research Fellowships of the Japan Society for the Promotion of Science for Young Scientists, No.~23-6949.
}
\thanks{2010~{\em Mathematics Subject Classification.}~primary 58E20, secondary 53C43, 53A07}
\date{June, 2011.} 
\begin{document} 
\maketitle 
\markboth{Biharmonic properly immersed submanifolds in Euclidean spaces} 
{Kazuo Akutagawa and Shun Maeta}

\begin{abstract}
We consider a {\it complete} biharmonic immersed submanifold $M$ 
in a Euclidean space $\mathbb{E}^N$. 
Assume that the immersion is {\it proper}, that is, 
the preimage of every compact set in $\mathbb{E}^N$ is also compact in $M$. 
Then, we prove that $M$ is minimal. 
It is considered as an affirmative answer to 
the global version of Chen's conjecture for biharmonic submanifolds. 
\end{abstract}

\section{Introduction}
\label{intro}

Let $M$ be an $n$-dimensional connected immersed submanifold 
in the Euclidean $N$-space $\mathbb{E}^N\ (n < N)$ 
and ${\bf x}$ its position vector field. 
Then, it is well known that 
\begin{equation}\label{mean curv} 
\Delta {\bf x} = n {\bf H}, 
\end{equation}
where $\Delta$ and ${\bf H}$ denote respectively the (non-positive) Laplace operator 
and the mean curvature vector field of $M$. 
The above equation shows particularly that 
$M$ is minimal, that is, ${\bf H} = 0$ if and only if 
the isometric immersion ${\bf x} : (M, g) \rightarrow \mathbb{E}^N$ 
is a harmonic map. 
Here, $g$ denotes the induced Riemannian metric on $M$ from ${\bf x}$. 
$M$ is said to be {\it biharmonic} if ${\bf H}$ satisfies the following: 
\begin{equation}\label{biharmonic}
\Delta {\bf H} = \frac{1}{n} \Delta^2 {\bf x} = 0. 
\end{equation} 
It is obvious that every minimal submanifold is biharmonic. 
We also note that $M$ is biharmonic if and only if 
${\bf x}$ is a biharmonic map. 

For biharmonic submanifolds, 
there is an interesting problem, namely, Chen's Conjecture 
(cf.~\cite{Chen}): 

\begin{conj} 
Any biharmonic submanifold $M$ in $\mathbb{E}^N$ is minimal. 
\end{conj}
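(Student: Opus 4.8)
The plan is to extract from the biharmonic condition \eqref{biharmonic} a single scalar differential inequality for $|\mathbf{H}|^2$ and then to force that function to be harmonic. First I would record the pointwise Bochner identity for the $\mathbb{E}^N$-valued field $\mathbf{H}$. Because the ambient space is flat and $\Delta$ acts componentwise, there is \emph{no} ambient curvature term, so
\begin{equation}\label{plan:bochner}
\tfrac{1}{2}\,\Delta |\mathbf{H}|^2 = \langle \mathbf{H}, \Delta \mathbf{H}\rangle + |\nabla \mathbf{H}|^2 .
\end{equation}
Feeding $\Delta \mathbf{H} = 0$ from \eqref{biharmonic} into \eqref{plan:bochner} yields $\tfrac12\Delta|\mathbf{H}|^2 = |\nabla \mathbf{H}|^2 \ge 0$, so $|\mathbf{H}|^2$ is a non-negative \emph{subharmonic} function on $M$. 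The flatness of $\mathbb{E}^N$ is what makes this clean; it is the structural fact the whole approach rests on.

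The second step reduces the conjecture to harmonicity of $|\mathbf{H}|^2$. Suppose one can show $\Delta|\mathbf{H}|^2 \equiv 0$. Then \eqref{plan:bochner} forces $\nabla\mathbf{H}\equiv 0$, so $\mathbf{H}$ is parallel as an $\mathbb{E}^N$-valued map and hence equals a fixed vector $\mathbf{H}_0 \in \mathbb{E}^N$. Since $\mathbf{H}_0$ is normal to $M$ at every point, all tangent spaces lie in $\mathbf{H}_0^{\perp}$, so $M$ is contained in an affine hyperplane $P$ orthogonal to $\mathbf{H}_0$. As $P$ is totally geodesic in $\mathbb{E}^N$, the second fundamental form of $M$ takes values in $P$, whence the mean curvature vector has no component along $\mathbf{H}_0$; this contradicts $\mathbf{H}=\mathbf{H}_0\ne 0$ unless $\mathbf{H}_0=0$. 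Thus minimality of $M$ is \emph{equivalent} to $|\mathbf{H}|^2$ being harmonic, and the entire problem collapses to a Liouville property for the non-negative subharmonic function $|\mathbf{H}|^2$.

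The third step is to supply that Liouville property by the cut-off/Caccioppoli route. One would multiply the identity $\tfrac12\Delta|\mathbf{H}|^2 = |\nabla\mathbf{H}|^2$ by $\varphi^2$ for a Lipschitz cut-off $\varphi$, integrate by parts on $M$, and attempt to let $\varphi \uparrow 1$ so that the gradient and boundary error terms supported near infinity vanish, forcing $\int_M |\nabla\mathbf{H}|^2 = 0$. Success requires controlling $|\mathbf{H}|$ (say, boundedness, or membership in some $L^p(M)$) together with the volume growth of the geodesic balls of $(M,g)$, so that the error integrals genuinely tend to zero.

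The hard part — and the reason the statement is posed as a \emph{conjecture} rather than a theorem — is precisely this last step in full generality. For a \emph{general} complete immersed $M$ there is a priori no bound on $|\mathbf{H}|$, no control of the intrinsic volume growth, and no guarantee that $M$ is parabolic; the cut-off integration may leave an uncontrolled contribution at infinity, and a non-constant positive subharmonic $|\mathbf{H}|^2$ cannot be excluded by these means alone. This is the genuine obstacle: without extra structure one cannot close the Caccioppoli estimate, and the unconditional conjecture remains open. The natural way forward, and the one this paper pursues, is to impose a hypothesis that tames the behaviour at infinity — completeness together with \emph{properness} of the immersion — which controls $|\mathbf{H}|$ and the volume of geodesic balls well enough to carry out exactly the Liouville step sketched above.
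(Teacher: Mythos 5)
Your first two steps are correct as far as they go: the componentwise Bochner identity is valid for the $\mathbb{E}^N$-valued field $\mathbf{H}$, so $\Delta\mathbf{H}=0$ does make $|\mathbf{H}|^2$ subharmonic, and your reduction of minimality to harmonicity of $|\mathbf{H}|^2$ (parallel $\mathbf{H}$, hence constant $\mathbf{H}_0$, hence $M$ lies in a hyperplane orthogonal to $\mathbf{H}_0$, forcing $\mathbf{H}_0=0$) is sound; you are also right that the unconditional conjecture is open. The genuine gap is that subharmonicity throws away the decisive part of the biharmonic condition, namely the coupling between $\mathbf{H}$ and the second fundamental form. The paper works instead with the normal component of the system \eqref{N-S}, $\Delta^{\perp}\mathbf{H} = \sum_{i} h(A_{\mathbf{H}}e_i, e_i)$, together with the pointwise Cauchy--Schwarz bound $|h_N|_g^2 \geq n H_N^2$, to prove the far stronger inequality \eqref{key}, $\Delta|\mathbf{H}|^2 \geq 2n\,|\mathbf{H}|^4$. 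That superquadratic right-hand side is the entire engine of the proof of Theorem~\ref{Main Result}: it feeds a Cheng--Yau-type \emph{pointwise} maximum-principle argument applied to the extrinsic test function $F_{\rho} = (\rho^2 - |\mathbf{x}|^2)^2\,|\mathbf{H}|^2$ on $M \cap \mathbf{x}^{-1}\big(\overline{\mathbf{B}_{\rho}}\big)$; at an interior maximum $p$ one divides $\Delta u \geq 2n u^2$ by $u$, absorbs the error terms coming from $\Delta|\mathbf{x}|^2$ and $|\nabla|\mathbf{x}|^2|_g^2$, and obtains $F_{\rho}(p) \leq c\rho^2$, whence $|\mathbf{H}(x)|^2 \leq c\rho^2/(\rho^2 - |\mathbf{x}(x)|^2)^2 \to 0$ as $\rho \to \infty$. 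With only $\Delta u \geq 0$, the maximum-point computation yields no bound on $u$ whatsoever, so neither the paper's argument nor any cut-off scheme of the type you propose can be closed from your inequality. (Your step two, though correct, is bypassed entirely: the paper kills $\mathbf{H}$ pointwise, never passing through harmonicity of $|\mathbf{H}|^2$.)

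Relatedly, your final paragraph misstates what properness buys. It does not ``control $|\mathbf{H}|$ and the volume of geodesic balls'': a properly immersed submanifold can have unbounded mean curvature and arbitrary intrinsic volume growth, and no Caccioppoli estimate, integration by parts, or parabolicity argument appears in the paper. Properness enters exactly once, to make $M \cap \mathbf{x}^{-1}\big(\overline{\mathbf{B}_{\rho}}\big)$ compact so that $F_{\rho}$, which vanishes on $M \cap \mathbf{x}^{-1}\big(\partial\overline{\mathbf{B}_{\rho}}\big)$, attains a positive interior maximum; the extrinsic weight $\rho^2 - |\mathbf{x}|^2$ simultaneously replaces the intrinsic distance function, which is what lets the paper dispense with the Ricci lower bound required by the classical Cheng--Yau principle. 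Note also that a Liouville theorem for nonnegative subharmonic functions is simply false in this setting: on a plane through the origin in $\mathbb{E}^3$ (properly embedded, minimal, hence biharmonic) the function $|\mathbf{x}|^2$ is nonnegative, subharmonic and unbounded. So even after adding the properness hypothesis, your third step would remain stuck; the missing idea is precisely the self-improving inequality $\Delta|\mathbf{H}|^2 \geq 2n|\mathbf{H}|^4$ of the paper's Lemma in Section~2.
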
 

There are many affirmative partial answers to Conjecture~$1$ 
(cf.~\cite{Chen}, \cite{Chen-Ishikawa-1}, \cite{Chen-Ishikawa-2}, \cite{Dimi}, \cite{Hasanis-Vlachos}, \cite{Leuven}). 
In particular, there are some complete affirmative answers 
if $M$ is one of the following: 
(a) a curve \cite{Dimi}, 
(b) a surface in $\mathbb{E}^3$ \cite{Chen}, 
(c) a hypersurface in $\mathbb{E}^4$ \cite{Hasanis-Vlachos}, \cite{Leuven}. 

On the other hand, 
since there is no assumption of {\it completeness} for submanifolds in Conjecture~1, 
in a sense it is a problem in {\it local} differential geometry.  
In this article, we reformulate Conjecture~1 into a problem 
in {\it global} differential geometry as the following (cf.~\cite{N-U-1}, \cite{N-U-2}):  

\begin{conj} 
Any {\rm complete} biharmonic immersed submanifold in $\mathbb{E}^N$ is minimal. 
\end{conj}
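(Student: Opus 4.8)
The plan is to reduce the conjecture to a Liouville-type property for a single nonnegative subharmonic function, and then to isolate the one analytic input that completeness alone does not supply.

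\textbf{Step 1 (a differential inequality for $|\mathbf H|^2$).} Regard $\mathbf H$ as an $\mathbb E^N$-valued function on $M$. Differentiating \eqref{mean curv} gives $\Delta\mathbf H=\tfrac1n\Delta^2\mathbf x=0$ by \eqref{biharmonic}, so every Euclidean component of $\mathbf H$ is a harmonic function on $(M,g)$. Writing $u:=|\mathbf H|^2$, the Bochner identity then collapses to the exact relation $\tfrac12\Delta u=|\widetilde\nabla\mathbf H|^2$, where $\widetilde\nabla$ is the ambient connection; in particular $u$ is subharmonic. Splitting $\widetilde\nabla_{e_i}\mathbf H=-A_{\mathbf H}e_i+\nabla^\perp_{e_i}\mathbf H$ into its tangential and normal parts (here $A_{\mathbf H}$ is the shape operator and $\nabla^\perp$ the normal connection) yields $|\widetilde\nabla\mathbf H|^2=|A_{\mathbf H}|^2+|\nabla^\perp\mathbf H|^2\ge|A_{\mathbf H}|^2$. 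Since $\operatorname{tr}A_{\mathbf H}=\langle\operatorname{tr}B,\mathbf H\rangle=n|\mathbf H|^2$, the Cauchy--Schwarz inequality $|A_{\mathbf H}|^2\ge\tfrac1n(\operatorname{tr}A_{\mathbf H})^2$ gives the clean pointwise bound
\[
\Delta u\ \ge\ 2n\,u^2,\qquad u=|\mathbf H|^2\ge0 .
\]

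\textbf{Step 2 (from $u\equiv\mathrm{const}$ to minimality).} Suppose we knew $u$ were constant. Then $\tfrac12\Delta u=|\widetilde\nabla\mathbf H|^2=0$, so $\mathbf H$ is a fixed vector $\mathbf H_0\in\mathbb E^N$, and $\mathbf H_0=\mathbf H(p)$ is normal to $M$ at every $p$. Consider $f:=\langle\mathbf x,\mathbf H_0\rangle$. For tangent $X$ we have $Xf=\langle X,\mathbf H_0\rangle=0$, so $f$ is constant on the connected manifold $M$; on the other hand $\Delta f=\langle\Delta\mathbf x,\mathbf H_0\rangle=n|\mathbf H_0|^2$. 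Therefore $\mathbf H_0=0$ and $M$ is minimal. Thus the entire conjecture is equivalent to the assertion that a nonnegative solution of $\Delta u\ge2n\,u^2$ on a complete $M$ must be constant (equivalently, must vanish).

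\textbf{Step 3 (the main obstacle: a Keller--Osserman/Liouville theorem).} The heart of the matter is the semilinear inequality $\Delta u\ge2n\,u^2$ with $u\ge0$ on a \emph{complete} manifold. On $\mathbb E^n$, or on any manifold of at most polynomial volume growth, the Keller--Osserman argument forces $u\equiv0$; equivalently, by Yau's $L^p$-Liouville theorem it would suffice to establish $u\in L^p(M)$ for some $p>1$, after which a standard cutoff computation using $|\nabla\varphi|\le C/R$ yields $\widetilde\nabla\mathbf H=0$ and Step~2 applies. The difficulty is that completeness alone controls neither the intrinsic volume growth of $(M,g)$ nor the integrability of $u$: manifolds of exponential volume growth (modelled on $\mathbb H^n$) do carry positive solutions of $\Delta u=u^2$, so no purely abstract Liouville theorem can close the gap. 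This is exactly where an extrinsic hypothesis would enter --- properness lets one replace intrinsic balls by the compact slices $\mathbf x^{-1}(\overline{B^{\mathbb E^N}_R})$ and invoke a monotonicity/coarea argument to bound volume growth, hence $\int_{B_R}u^2$, feeding the cutoff estimate. For the conjecture as worded, producing such volume-growth or $L^p$ control for an \emph{arbitrary} complete biharmonic immersion is the essential and, with present techniques, the decisive obstacle; any successful attack must extract this control from the full biharmonic system --- in particular from the tangential equation $2\sum_iA_{\nabla^\perp_{e_i}\mathbf H}e_i+\tfrac n2\nabla u=0$, which ties $\nabla u$ to $A_{\mathbf H}$ and $\nabla^\perp\mathbf H$ --- rather than from completeness alone.
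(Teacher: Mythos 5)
First, the logical status: the statement you were given is Conjecture~2, which the paper itself does \emph{not} prove --- its actual result (Theorem~\ref{Main Result}) settles only the properly immersed case, and the full conjecture is left open. Your proposal, which proves what can be proved and then isolates the missing analytic input, is therefore an honest reduction rather than a failed proof, and it is correct where it makes definite claims. Your Step~1 is right, and it is exactly the paper's key lemma, the inequality $\Delta|{\bf H}|^2\ge 2n|{\bf H}|^4$ of \eqref{key}; your derivation (componentwise harmonicity of ${\bf H}$ plus the Weingarten splitting of the ambient derivative) is if anything cleaner than the paper's, which quotes the normal equation of the system \eqref{N-S} and computes with $\Delta^{\perp}$ --- the two routes are equivalent, since that equation is precisely the normal projection of $\Delta{\bf H}=0$. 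Your Step~2 is correct but superfluous: a constant $u\ge 0$ satisfying $\Delta u\ge 2nu^2$ satisfies $0\ge 2nu^2$, so it vanishes without any appeal to the auxiliary function $\langle{\bf x},{\bf H}_0\rangle$.

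Two corrections to Step~3, which is where your reduction meets the paper. (i) Your claimed counterexample is false: hyperbolic space carries no positive solution of $\Delta u=u^2$. Indeed ${\rm Ric}_{\mathbb{H}^n}=-(n-1)g$ is bounded from below, so the Cheng--Yau generalized maximum principle --- the very statement \eqref{GMP} quoted in the paper's introduction --- already forces $u\equiv 0$ there. Counterexamples to a purely abstract Liouville theorem do exist, but they require Ricci curvature diverging to $-\infty$ faster than quadratically in the distance (equivalently, super-exponential volume growth): for instance, on a rotationally symmetric surface $dr^2+e^{2r^3}d\theta^2$, a positive function equal to $r$ outside a compact set satisfies $\Delta u\ge u^2$ and can be adjusted to a global solution. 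So your structural conclusion --- completeness alone cannot suffice --- survives, but with the wrong witness. (ii) Your sketch of how properness would enter (volume-growth bounds, control of $\int u^2$, an $L^p$/cutoff argument) is not how the paper uses it, and the actual mechanism is both simpler and the real point of the paper: it runs the Cheng--Yau maximum-point argument with the \emph{extrinsic} localization $F_\rho=(\rho^2-|{\bf x}|^2)^2u$ on the slice $M\cap{\bf x}^{-1}\big(\overline{{\bf B}_\rho}\big)$, which is compact precisely because the immersion is proper; at an interior maximum, $\nabla F_\rho=0$ and $\Delta F_\rho\le 0$ yield \eqref{Lap}, and the extrinsic terms are controlled with no curvature, volume, or integrability hypothesis whatsoever, because $\Delta|{\bf x}|^2=2n+2\langle\Delta{\bf x},{\bf x}\rangle\le 2n+2n|{\bf H}|\,|{\bf x}|$ and $|\nabla|{\bf x}|^2|_g^2\le 4n|{\bf x}|^2$ follow from \eqref{mean curv} alone (this is \eqref{sub}). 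Replacing the intrinsic distance, whose Laplacian requires a Ricci lower bound, by $|{\bf x}|^2$, whose Laplacian is controlled by ${\bf H}$ itself, is exactly how the paper dissolves the obstacle you describe --- for proper immersions; for merely complete ones the conjecture remains open, as you say.
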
 

An immersed submanifold $M$ in $\mathbb{E}^N$ is said to be {\it properly immersed} 
if the immersion $M \rightarrow \mathbb{E}^N$ is a proper map. 
Here, we remark that the properness of the immersion implies the completeness of $(M, g)$.  
Our main result is the following, which gives an affirmative partial answer to Conjecture~2:  

\begin{thm}\label{Main Result} 
Any biharmonic {\rm properly immersed} submanifold $M$ in $\mathbb{E}^N$ is minimal. 
\end{thm}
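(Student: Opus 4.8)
The plan is to exploit the fact that the biharmonic condition \eqref{biharmonic} makes $\mathbf{H}$ a \emph{harmonic} $\mathbb{E}^N$-valued map and then to run a Liouville-type argument on the complete manifold $(M,g)$, with properness entering to control the growth of $|\mathbf{H}|^2$ at infinity. First I would record the basic identity: since $\Delta\mathbf{H}=\tfrac1n\Delta^2\mathbf{x}=0$, each Euclidean component of $\mathbf{H}$ is a harmonic function on $(M,g)$, so by the product rule
\begin{equation}
\Delta|\mathbf{H}|^2 = 2\langle\Delta\mathbf{H},\mathbf{H}\rangle + 2|\nabla\mathbf{H}|^2 = 2|\nabla\mathbf{H}|^2 \ge 0 ,
\end{equation}
that is, $|\mathbf{H}|^2$ is subharmonic. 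The goal is then to upgrade subharmonicity to $\mathbf{H}\equiv 0$.

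Next I would establish a Caccioppoli-type inequality. Testing $\langle\Delta\mathbf{H},\mathbf{H}\rangle=0$ against $\eta^2$ for a compactly supported Lipschitz cutoff $\eta$ and integrating by parts gives $\int_M\eta^2|\nabla\mathbf{H}|^2 = -2\int_M\eta\,\langle\nabla_{\nabla\eta}\mathbf{H},\mathbf{H}\rangle$, and then Cauchy--Schwarz together with Young's inequality yields
\begin{equation}
\int_M \eta^2\,|\nabla\mathbf{H}|^2\,dv \ \le\ 4\int_M |\nabla\eta|^2\,|\mathbf{H}|^2\,dv .
\end{equation}
Here is where properness is first used: since the immersion is isometric, intrinsic distance dominates extrinsic distance, so every intrinsic geodesic ball $B_R\subset M$ is contained in the slice $\{\,|\mathbf{x}-\mathbf{x}(p_0)|\le R\,\}$, which is \emph{compact} by properness. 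Thus the intrinsic balls are relatively compact, the cutoffs $\eta_R$ (equal to $1$ on $B_R$, vanishing outside $B_{2R}$, with $|\nabla\eta_R|\le C/R$) are legitimate, and all the integrals above are finite, so that $\int_{B_R}|\nabla\mathbf{H}|^2 \le \frac{C}{R^2}\int_{B_{2R}\setminus B_R}|\mathbf{H}|^2$.

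The main obstacle is the passage $R\to\infty$: to force $\nabla\mathbf{H}\equiv 0$ I need
\[
\liminf_{R\to\infty}\frac{1}{R^2}\int_{B_{2R}\setminus B_R}|\mathbf{H}|^2\,dv = 0,
\]
for instance a global bound $\int_M|\mathbf{H}|^2\,dv<\infty$, or the subquadratic growth $\int_{B_{R}}|\mathbf{H}|^2\,dv=o(R^2)$. This is exactly the point at which properness (rather than mere completeness) must be brought to bear, and I expect it to be the hard part: I would try to combine the subharmonicity of $|\mathbf{H}|^2$ from the first step with an $L^p$-Liouville theorem of Yau--Karp--Schoen type, using the containment of intrinsic balls in relatively compact extrinsic balls to convert properness into the required volume/growth control of $\int_{B_R}|\mathbf{H}|^2$. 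Establishing this growth estimate is the crux of the whole argument.

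Once $\nabla\mathbf{H}\equiv0$ is known, the conclusion is short and uses properness only through connectedness. Then $\mathbf{H}\equiv\mathbf{H}_0$ is a constant vector of $\mathbb{E}^N$. Since the mean curvature vector is everywhere normal to $M$, for every $p\in M$ and $v\in T_pM$ we have $v\,\langle\mathbf{x},\mathbf{H}_0\rangle=\langle v,\mathbf{H}_0\rangle=0$, so $\langle\mathbf{x},\mathbf{H}_0\rangle$ is constant on the connected manifold $M$; hence $M$ lies in an affine hyperplane orthogonal to $\mathbf{H}_0$. A hyperplane is totally geodesic in $\mathbb{E}^N$, so the component of the second fundamental form of $M$ in the direction $\mathbf{H}_0$ vanishes identically, whence $|\mathbf{H}_0|^2=\langle\mathbf{H},\mathbf{H}_0\rangle=0$. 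Therefore $\mathbf{H}\equiv0$, i.e.\ $M$ is minimal, as claimed.
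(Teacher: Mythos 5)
Your reduction to showing $\nabla\mathbf{H}\equiv 0$ contains a genuine gap at exactly the point you flag as ``the crux'': the growth estimate $\int_{B_{2R}\setminus B_R}|\mathbf{H}|^2\,dv=o(R^2)$ does not follow from properness, and you give no argument for it. Properness only tells you that the extrinsic slices $M\cap\mathbf{x}^{-1}(\overline{\mathbf{B}_\rho})$ (hence the intrinsic balls) are compact; it gives no control on the volume growth of intrinsic balls and no a priori bound on $|\mathbf{H}|$, so the right-hand side of your Caccioppoli inequality is not known to be $o(R^2)$, or even finite as a function of $R$ in any quantified way. Moreover the only pointwise information you extract from biharmonicity is $\Delta|\mathbf{H}|^2=2|\nabla\mathbf{H}|^2\ge 0$, i.e.\ bare subharmonicity, and on a general complete manifold (no curvature or volume hypotheses are available here) a bounded nonconstant subharmonic function is entirely possible; so without the missing growth control the Liouville/Karp route does not close. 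Your final step (a constant mean curvature vector $\mathbf{H}_0$ forces $M$ into a hyperplane orthogonal to $\mathbf{H}_0$, whence $|\mathbf{H}_0|^2=0$) is correct, but it is downstream of the unproved claim.

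The paper avoids this issue by extracting more than subharmonicity from the biharmonic equation: decomposing $\Delta\mathbf{H}=0$ into tangential and normal parts gives $\Delta^{\perp}\mathbf{H}=\sum_i h(A_{\mathbf H}e_i,e_i)$, and from this one derives the pointwise differential inequality $\Delta|\mathbf{H}|^2\ge 2n|\mathbf{H}|^4$. The superquadratic reaction term $|\mathbf{H}|^4$ is what makes a Cheng--Yau-type localized maximum principle work with \emph{no} growth or integral hypotheses: one maximizes $F=(\rho^2-|\mathbf{x}|^2)^2|\mathbf{H}|^2$ over the compact set $M\cap\mathbf{x}^{-1}(\overline{\mathbf{B}_\rho})$ (this is the only place properness is used), and the first- and second-derivative tests at the interior maximum, combined with $\Delta|\mathbf{x}|^2\le 2n+2n|\mathbf{H}|\,|\mathbf{x}|$ and $|\nabla|\mathbf{x}|^2|_g^2\le 4n|\mathbf{x}|^2$, yield $F\le c\rho^2$ and hence $|\mathbf{H}|^2\le c\rho^2/(\rho^2-|\mathbf{x}|^2)^2\to 0$. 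Note also the choice of the \emph{extrinsic} weight $(\rho^2-|\mathbf{x}|^2)^2$ rather than an intrinsic distance cutoff, which sidesteps the need for any Ricci lower bound. If you want to repair your argument, the missing ingredient is precisely this stronger lemma $\Delta|\mathbf{H}|^2\ge 2n|\mathbf{H}|^4$ together with a localization that does not require volume or curvature control.
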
 

For proving Theorem~\ref{Main Result}, 
the basic tool is the generalized maximum principle technique
developed in Cheng-Yau~\cite{Cheng-Yau} as follows: 

{\it Let $(M, g)$ be a complete manifold
whose Ricci curvature ${\rm Ric}_g$ is bounded from below. 
Let $u$ be a smooth nonnegative function on $M$. 
Assume that there exists a positive constant $k > 0$ such that
\begin{equation}\label{GMP}
\Delta u \geq k u^2\quad {\rm on}\ \ M.
\end{equation} 
Then, $u = 0$ on $M$.} 

The outline of proof of the generalized maximum principle is the following. 
For a fixed point $x_0 \in M$ and each large positive constant $\rho > 0$, 
consider the following smooth function 
$$ 
f(x) := (\rho^2 - r(x)^2)^2 u(x)\quad {\rm for}\ \ x \in \overline{B_{\rho}(x_0)}, 
$$ 
where $r(x) := {\rm dist}_g(x, x_0)$ 
and $\overline{B_{\rho}(x_0)} := \{ x \in M\ |\ r(x) \leq \rho \}$ 
denote respectively the distance from $x_0$ 
and the closed geodesic ball of radius $\rho$ centered at $x_0$. 
Then, the inequality (\ref{GMP}) implies that 
$$ 
f(p) \leq c \rho^3\quad {\rm at\ a\ maximum\ point}\ \ p \in 
B_{\rho}(x_0) := \{ x \in M\ |\ r(x) < \rho \}, 
$$ 
and hence 
\begin{equation}\label{crucial esti}
u(x) \leq \frac{c \rho^3}{(\rho^2 - r(x)^2)^2}\quad {\rm for}\ \ x \in B_{\rho}(x_0). 
\end{equation} 
Letting $\rho \nearrow \infty$ in the above inequality, 
we then get that $u = 0$ on $M$. 
Here, $c > 0$ is a positive constant depending only on $k$, dim~$M$ and 
the constant $\kappa \geq 0$ satisfying ${\rm Ric}_g \geq - \kappa$ on $M$. 
The assumption of Ricci curvature bound from below 
is necessary for the estimate of $\Delta r(p)$ from above 
(see \cite{Yau} for details). 

When $(M, g)$ is a Riemannian immersed submanifold in $\mathbb{E}^N$, 
it is impossible to get such Ricci curvature bound from below 
without an assumption of boundedness for the second fundamental form $h$ of $M$. 
However, for Conjecture~2, any assumption for $h$ is artificial in some sense. 
To overcome this difficulty, we consider the function 
$$ 
F(x) := (\rho^2 - |{\bf x}(x)|^2)^2 u(x)\quad {\rm for}\ \ 
x \in M \cap {\bf x}^{-1}\big{(} \overline{{\bf B}_{\rho}} \big{)}  
$$ 
instead of $f(x)$, where $|{\bf x}(x)|^2 := \langle{\bf x}(x), {\bf x}(x)\rangle$ 
denotes the square-norm of the position vector 
${\bf x}(x)$ of $x \in M$ in $\mathbb{E}^N$ 
and $\overline{{\bf B}_{\rho}} := \{ {\bf x} \in \mathbb{E}^N~|~|{\bf x}| \leq \rho \}$.
From the formula (\ref{mean curv}), 
we then get 
$$  
|\Delta {\bf x}(x)| = n |{\bf H}(x)|. 
$$ 
Moreover if $M$ is biharmonic, 
by the harmonicity (\ref{biharmonic}) combined with the above estimate, 
one can obtain a similar estimate to (\ref{crucial esti}) 
for $u(x) := |{\bf H}(x)|^2$ especially (see Section~$3$ for details). 

The remaining sections are organized as follows. 
Section~$2$ contains some necessary definitions and preliminary geometric results. 
Section~$3$ is devoted to the proof of Theorem~\ref{Main Result}.

\noindent 
{\bf Acknowledgements.} 
The first author would like to thank 
Reiko Aiyama, Nobumitsu Nakauchi and Hajime Urakawa for helpful discussions. 
He also would like to thank Luis Al\'ias and Reiko Miyaoka for useful comments.

\section{\bf Preliminaries}\label{Pre} 

Let $M$ be an $n$-dimensional immersed submanifold in $\mathbb{E}^N$, 
${\bf x} : M \rightarrow \mathbb{E}^N$ its immersion  
and $g$ its induced Riemannian metric. 
For simplicity, we often identify $M$ with its immersed image ${\bf x}(M)$ in every local arguments. 
Let $\nabla$ and $D$ denote respectively the Levi-Civita connections 
of $(M, g)$ and $\mathbb{E}^N = (\mathbb{R}^N, \langle\ ,\ \rangle)$. 
For any vector fields $X, Y \in \frak{X}(M)$, 
the Gauss formula is given by 
$$ 
D_XY = \nabla_XY + h(X, Y), 
$$ 
where $h$ stands for the second fundamental form of $M$ in $\mathbb{E}^N$. 
For any normal vector field $\xi$, the Weingarten map $A_{\xi}$ with respect to $\xi$ 
is given by 
$$ 
D_X\xi = - A_{\xi}X + \nabla^{\perp}_X\xi, 
$$ 
where $\nabla^{\bot}$ stands for the normal connection of the normal bundle of $M$ in $\mathbb{E}^N$. 
It is well known that $h$ and $A$ are related by 
$$ 
\langle h(X, Y), \xi \rangle = \langle A_{\xi}X, Y \rangle. 
$$ 

For any $x \in M$, 
let $\{e_1, \cdots, e_n, e_{n+1}, \cdots, e_N\}$ be an orthonormal basis of $\mathbb{E}^N$ at $x$ 
such that $\{e_1, \cdots, e_n\}$ is an orthonormal basis of $T_xM$. 
Then, $h$ is decomposed as at $x$ 
$$ 
h(X, Y) = \Sigma_{\alpha=n+1}^N h_{\alpha}(X, Y)e_{\alpha}. 
$$ 
The mean curvature vector ${\bf H}$ of $M$ at $x$ is also given by 
$$ 
{\bf H}(x) = \frac{1}{n} \Sigma_{i = 1}^n h(e_i, e_i) = \Sigma_{\alpha=n+1}^N H_{\alpha}(x)e_{\alpha},\qquad 
H_{\alpha}(x) := \frac{1}{n} \Sigma_{i = 1}^n h_{\alpha}(e_i, e_i).  
$$ 
It is well know that the necessary and sufficient conditions for $M$ in $\mathbb{E}^N$ 
to be biharmonic, namely $\Delta {\bf H} = 0$, 
are the following (cf.~\cite{Chen}, \cite{Chen-Ishikawa-1}, \cite{Chen-Ishikawa-2}): 
\begin{equation}\label{N-S} 
\begin{cases} 
\ \ \Delta^{\perp} {\bf H} - \Sigma_{i=1}^n h(A_{\bf H}e_i, e_i) = 0, \\ 
\ \ n~\nabla |{\bf H}|^2 + 4~{\rm trace}~A_{\nabla^{\perp} {\bf H}} = 0, \\  
\end{cases} 
\end{equation} 
where $\Delta^{\perp}$ is the (non-positive) Laplace operator associated with the normal connection $\nabla^{\perp}$. 

From the first equation of (\ref{N-S}), we have the following. 

\begin{lemm} 
Let $M = (M, g)$ be a biharmonic immersed submanifold in $\mathbb{E}^N$. 
Then, the following inequality for $|{\bf H}|^2$ holds 
\begin{equation}\label{key} 
\Delta |{\bf H}|^2 \geq 2 n |{\bf H}|^4. 
\end{equation} 
\end{lemm}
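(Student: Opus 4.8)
The plan is to combine a Bochner (Weitzenb\"ock) identity for the normal section ${\bf H}$ with the first equation of the biharmonic system (\ref{N-S}) and a single Cauchy--Schwarz estimate. First I would write down the standard identity for the non-positive connection Laplacian $\Delta^{\perp}$ acting on the normal bundle,
\begin{equation*}
\frac{1}{2}\Delta |{\bf H}|^2 = \langle \Delta^{\perp}{\bf H}, {\bf H}\rangle + |\nabla^{\perp}{\bf H}|^2,
\end{equation*}
which is the bundle analogue of $\frac{1}{2}\Delta f^2 = f\Delta f + |\nabla f|^2$ and holds because $\nabla^{\perp}$ is metric. Since $|\nabla^{\perp}{\bf H}|^2 \geq 0$, this immediately gives $\frac{1}{2}\Delta|{\bf H}|^2 \geq \langle \Delta^{\perp}{\bf H}, {\bf H}\rangle$, so it remains to bound the right-hand side from below.

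Next I would feed in the first equation of (\ref{N-S}), namely $\Delta^{\perp}{\bf H} = \Sigma_{i=1}^n h(A_{\bf H}e_i, e_i)$, and rewrite the inner product using the defining relation $\langle h(X,Y),\xi\rangle = \langle A_{\xi}X, Y\rangle$. With $\xi = {\bf H}$ this turns each term into $\langle h(A_{\bf H}e_i, e_i), {\bf H}\rangle = \langle A_{\bf H}(A_{\bf H}e_i), e_i\rangle = \langle A_{\bf H}^2 e_i, e_i\rangle$, so that
\begin{equation*}
\langle \Delta^{\perp}{\bf H}, {\bf H}\rangle = \Sigma_{i=1}^n \langle A_{\bf H}^2 e_i, e_i\rangle = {\rm trace}\,A_{\bf H}^2 = |A_{\bf H}|^2.
\end{equation*}

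Finally, since $A_{\bf H}$ is a self-adjoint endomorphism of the $n$-dimensional space $T_xM$, I would estimate its squared norm by its trace. Computing ${\rm trace}\,A_{\bf H} = \Sigma_{i=1}^n \langle A_{\bf H}e_i, e_i\rangle = \Sigma_{i=1}^n \langle h(e_i,e_i), {\bf H}\rangle = \langle n{\bf H}, {\bf H}\rangle = n|{\bf H}|^2$, and applying Cauchy--Schwarz to the eigenvalues of $A_{\bf H}$ gives $({\rm trace}\,A_{\bf H})^2 \leq n\,{\rm trace}\,A_{\bf H}^2$, hence ${\rm trace}\,A_{\bf H}^2 \geq \frac{1}{n}(n|{\bf H}|^2)^2 = n|{\bf H}|^4$. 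Chaining the three estimates yields $\frac{1}{2}\Delta|{\bf H}|^2 \geq n|{\bf H}|^4$, which is exactly (\ref{key}).

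The computation is short and each step is routine; the only points demanding care are the sign bookkeeping in the Bochner identity (consistent with the non-positive convention for $\Delta$ and $\Delta^{\perp}$, so that discarding $|\nabla^{\perp}{\bf H}|^2$ preserves the correct direction of the inequality) and noting that the second equation of (\ref{N-S}) plays no role here---the bound follows from the first equation alone. I expect no substantive obstacle beyond verifying that $A_{\bf H}$ is genuinely self-adjoint, so that the eigenvalue form of Cauchy--Schwarz applies, and that the trace identity ${\rm trace}\,A_{\bf H} = n|{\bf H}|^2$ holds, both of which are immediate from the definitions recorded in Section~\ref{Pre}.
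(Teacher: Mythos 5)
Your proof is correct and follows essentially the same route as the paper's: the Bochner identity for $\Delta^{\perp}$, substitution of the first equation of (\ref{N-S}) to get $\langle\Delta^{\perp}{\bf H},{\bf H}\rangle = \mathrm{trace}\,A_{\bf H}^2$, and the Cauchy--Schwarz bound $\mathrm{trace}\,A_{\bf H}^2 \geq \frac{1}{n}(\mathrm{trace}\,A_{\bf H})^2 = n|{\bf H}|^4$. The only cosmetic difference is that the paper normalizes ${\bf H}$ to a unit normal $e_N$ and phrases the last step as $|h_N|_g^2 \geq n H_N^2$ (treating ${\bf H}(x)=0$ as a separate trivial case), whereas you work directly with $A_{\bf H}$, which handles both cases at once.
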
 

\begin{proof} 
Under the above notations, 
the first equation of (\ref{N-S}) implies that, at each $x \in M$, 
\begin{align}\label{ell-ineq} 
\Delta |{\bf H}|^2 
& = 2~\Sigma_{i=1}^n \langle \nabla_{e_i}^{\perp} {\bf H}, \nabla_{e_i}^{\perp} {\bf H} \rangle 
+ 2~\langle \Delta^{\perp} {\bf H}, {\bf H} \rangle \notag \\ 
& \geq 2~\Sigma_{i=1}^n \langle h(A_{\bf H} e_i, e_i), {\bf H} \rangle \\ 
& = 2~\Sigma_{i=1}^n \langle A_{\bf H} e_i, A_{\bf H} e_i \rangle .\notag   
\end{align} 
When ${\bf H}(x) \ne 0$, set $e_N := \frac{{\bf H}(x)}{|{\bf H}(x)|}$. 
Then, ${\bf H}(x) = H_N(x) e_N$ and $|{\bf H}(x)|^2 = H_N(x)^2$. 
From (\ref{ell-ineq}), we have at $x$ 
\begin{align}\label{ell-ineq} 
\Delta |{\bf H}|^2 
& \geq 2~H_N^2~\Sigma_{i=1}^n \langle A_{e_N} e_i, A_{e_N} e_i \rangle\notag \\ 
& = 2~|{\bf H}|^2~|h_N|_g^2 \notag\\ 
& \geq 2 n~|{\bf H}|^2~H_N^2 \notag\\ 
& = 2 n~|{\bf H}|^4 \notag. 
\end{align} 
Even when ${\bf H}(x) = 0$, the above inequality~(\ref{key}) still holds at $x$. 
This completes the proof. 
\end{proof}

\section{\bf Proof of Main Theorem}\label{EC} 

\begin{proof}[Proof of Theorem~\ref{Main Result}]\quad 
If $M$ is compact, applying the standard maximum principle to the elliptic inequality (\ref{key}), 
we have that ${\bf H} = 0$ on $M$. 
But, there is no compact minimal submanifold in $\mathbb{E}^N$.
Hence, this case never occur. 
Therefore, we may assume that $M$ is noncompact. 
Suppose that ${\bf H}(x_0) \ne 0$ at some point $x_0 \in M$. 
Then, we will lead a contradiction. 

Set 
$$ 
u(x) := |{\bf H}(x)|^2\quad {\rm for}\ \ x \in M. 
$$  
For each $\rho > 0$, consider the function 
$$ 
F(x) = F_{\rho}(x) := (\rho^2 - |{\bf x}(x)|^2)^2 u(x)\quad {\rm for}\ \ 
x \in M \cap {\bf x}^{-1}\big{(} \overline{{\bf B}_{\rho}} \big{)}.  
$$ 
Then, there exists $\rho_0 > 0$ such that $x_0 \in {\bf x}^{-1}\big{(} {\bf B}_{\rho_0} \big{)}$. 
For each $\rho \geq \rho_0$, 
$F = F_{\rho}$ is a nonnegative function which is not identically zero 
on $M \cap {\bf x}^{-1}\big{(} \overline{{\bf B}_{\rho}} \big{)}$. 
Take any $\rho \geq \rho_0$ and fix it. 
Since $M$ is properly immersed in $\mathbb{E}^N$, $M \cap {\bf x}^{-1}\big{(} \overline{{\bf B}_{\rho}} \big{)}$ is compact. 
By this fact combined with $F = 0$ on $M \cap {\bf x}^{-1} \big{(} \partial\overline{{\bf B}_{\rho}} \big{)}$, 
there exists a maximum point $p \in M \cap {\bf x}^{-1}\big{(} {\bf B}_{\rho} \big{)}$ of $F = F_{\rho}$ such that $F(p) > 0$. 
We have $\nabla F = 0$ at $p$, and hence 
\begin{equation}\label{grad}
\frac{\nabla u}{u} = \frac{2~\nabla |{\bf x}(x)|^2}{\rho^2 - |{\bf x}(x)|^2}\quad {\rm at}\ \ p. 
\end{equation} 
We also have that $\Delta F \leq 0$ at $p$. 
Combining this with (\ref{grad}), we obtain 
\begin{equation}\label{Lap} 
\frac{\Delta u}{u} \leq \frac{6~|\nabla |{\bf x}(x)|^2|_g^2}{(\rho^2 - |{\bf x}(x)|^2)^2} 
+ \frac{2~\Delta |{\bf x}(x)|^2}{\rho^2 - |{\bf x}(x)|^2}\quad {\rm at}\ \ p.   
\end{equation}   
From (\ref{biharmonic}), we note 
\begin{equation}\label{sub} 
\begin{cases} 
\ \ \Delta |{\bf x}(x)|^2 = 2~\Sigma_{i=1}^n |\nabla_{e_i} {\bf x}(x)|^2 + 2~\langle \Delta {\bf x}(x), {\bf x}(x) \rangle 
\leq 2 n + 2 n |{\bf H}|\cdot |{\bf x}(x)|, \\ 
\ \ |\nabla |{\bf x}(x)|^2|_g^2 \leq 4 n |{\bf x}(x)|^2.   
\end{cases} 
\end{equation} 
It then follows from (\ref{key}), (\ref{Lap}) and (\ref{sub}) that 
$$ 
u(p) \leq \frac{12  |{\bf x}(p)|^2}{(\rho^2 - |{\bf x}(p)|^2)^2} 
+ \frac{2  (1 + \sqrt{u(p)} |{\bf x}(p)|)}{\rho^2 - |{\bf x}(p)|^2},  
$$ 
and hence 
$$ 
F(p) \leq 12  |{\bf x}(p)|^2 + 2  (\rho^2 - |{\bf x}(p)|^2) + 2  \sqrt{F(p)} |{\bf x}(p)|.  
$$ 
Therefore, there exists a positive constant $c > 0$ such that 
$$ 
F(p) \leq c \rho^2. 
$$ 

Since $F(p)$ is the maximum of $F = F_{\rho}$, we have 
$$ 
F(x) \leq F(p) \leq c \rho^2\quad {\rm for}\ \ x \in M \cap {\bf x}^{-1}\big{(} \overline{{\bf B}_{\rho}} \big{)}, 
$$ 
and hence 
\begin{equation}\label{Final} 
|{\bf H}(x)|^2 = u(x) \leq \frac{c \rho^2}{(\rho^2 - |{\bf x}(x)|^2)^2}\quad 
{\rm for}\ \ x \in M \cap {\bf x}^{-1}\big{(} {\bf B}_{\rho} \big{)}\quad {\rm and}\ \ \rho \geq \rho_0. 
\end{equation} 
Letting $\rho \nearrow \infty$ in (\ref{Final}) for $x = x_0$, 
we have that 
$$ 
|{\bf H}(x_0)|^2 = 0. 
$$  
This contradicts our assumption that ${\bf H}(x_0) \ne 0$.  
Therefore, $M$ is minimal. 
\end{proof}  

\quad \\ 
\quad\\

\bibliographystyle{amsbook}

\quad\\
\quad\\

\noindent
Kazuo~AKUTAGAWA\\
Division of Mathematics, GSIS, Tohoku University, 
Sendai 980-8579, Japan.\\
e-mail:~akutagawa@math.is.tohoku.ac.jp\\

\noindent
Shun~MAETA~\\
Division of Mathematics, GSIS, Tohoku University, 
Sendai 980-8579, Japan.\\
e-mail:~maeta@ims.is.tohoku.ac.jp\\

\end{document}